\documentclass{amsart}
\usepackage{amsmath}
\usepackage{amssymb}
\usepackage{amsthm}

\usepackage{extarrows}
\usepackage{comment}
\usepackage{mathtools}
\usepackage{thmtools}
\usepackage{tikz}

\makeatletter
\@namedef{subjclassname@2020}{\textup{2020} Mathematics Subject Classification}
\makeatother

\usepackage{array}

\usepackage{arydshln}

\theoremstyle{plain}
\newtheorem{theorem}{Theorem}

\newtheorem*{theorem*}{Theorem}
\newtheorem{lemma}[theorem]{Lemma}

\newtheorem{proposition}[theorem]{Proposition}
\theoremstyle{definition}
\newtheorem{definition}[theorem]{Definition}

\newtheorem{conjecture}[theorem]{Conjecture}

\DeclareMathOperator{\Con}{Con}

\DeclareMathOperator{\FL}{FL}
\DeclareMathOperator{\OM}{OutMed}
\DeclareMathOperator{\IM}{InnMed}

\DeclareMathOperator{\SL}{SL}

\newcommand{\alg}[1]{\mathbf{#1}}
\newcommand{\rel}[1]{\mathbb{#1}}
\newcommand{\var}[1]{\mathcal{#1}}

\begin{document}

\title{Outer and inner medians in some small lattices}

\author{Leen Aburub}
\address{Bolyai Institute, University of Szeged, Szeged, Aradi V\'{e}rtan\'{u}k tere 1, HUNGARY 6720}
\email{aburub.Leen.Azeez@stud.u-szeged.hu}
%\address {Bolyai Institute, University of Szeged, Hungary}

\author{Gerg\H o Gyenizse}
\address{Department of Algebra and Number Theory, Bolyai Institute, University of Szeged, Szeged, Aradi V\'{e}rtan\'{u}k tere 1, HUNGARY 6720}
\email{gyenizse.gergo@math.u-szeged.hu}
%\address {Bolyai Institute, University of Szeged, Hungary}

\thanks{Supported by the \'UNKP-21-4-SZTE-512 New National Excellence Program and by the NKFIH grants K138892 and K153383 of the Ministry for Innovation and Technology from the source of the National Research, Development and Innovation Fund.}

\begin{abstract}
By median we mean a scheme that inputs three element of a lattice, and outputs an element that is an average of the three inputs in a certain sense. The medians of a given finite lattice form a new lattice that is usually larger than the original, but generates a (not necessarily strictly) smaller variety. A median is called inner if it is a term function. The inner median lattice is closely related to the symmetric part of the equational basis of the lattice. We determine the outer and inner median lattices of all lattices of at most six elements.
\end{abstract}

\keywords{lattices, free lattices, symmetrical terms, lattice median}

\subjclass[2020]{06B25, 06B20}

\maketitle

\section{Introduction}

When categorizing lattices, the usual starting point is historically modularity, then distributivity. More generally, the equational theory gives a great deal of information about the lattice, and the quasi-equational theory even more. By \cite{baker1972equational}, the equational theory of a finite lattice has a finite basis i.e. there is a finite set of identities held by the lattice so that all other held identities are a consequence of these.

In this paper, we are concerned by equational theories containing only symmetric terms. For example, distributivity can be famously characterized by the so-called {\em median law}: 
\[
(x\wedge y)\vee(x\wedge z)\vee(y\wedge z)\approx(x\vee y)\wedge(x\vee z)\wedge(y\vee z).
\]

Modularity, as will be seen later, also has such a characterization, though it is not as elegant.

In order to determine the symmetric equational theory of a lattice, we have to calculate the operations induced by symmetric term operations on it. We focus on the ternary case, as ternary lattice terms have a richness as large as those of higher arity (see \cite{freese1995free}), while being more manageable. We call such ternary operations (inner) medians. Note that this differs from the usual usage of this word, where median denotes any ternary operation satisfying the equational theory of the above function (see \cite{bandelt1983median}).

In this paper, we present some theoretical foundation of medians, and calculate the medians of small (at most $6$ elements) lattices. In doing so, we further our understanding of free lattices, structures that interest algebraists for almost a century--the word problem for lattices was solved by Whitman in 1941 \cite{whitman1941free}.

\section{Preliminaries and general observations}

For any $n\in\rel N$, we denote by $\FL(n)$ the lattice freely generated by the variables $x_1,\dots,x_n$, and by $\FL(\omega)$ the countably generated free lattice. The elements of $\FL(n)$ are called {\em $n$-ary lattice terms}. Each lattice term naturally induces an $n$-ary operation on any lattice. We usually do not distinguish between a term and its induced function if it does not lead to confusion. 

For any $t\in\FL(n)$ and permutation $\sigma\in S_n$ we denote by $t_{\sigma}$ the term obtained from $t$ by substituting all occurrences of $x_i$ with $x_{\sigma(i)}$. (In other words, $t_{\sigma}$ is the image of $t$ by the automorphism of $\FL(n)$ induced by the map $x_i\mapsto x_{\sigma(i)}$ on the free generator). We call a term $t$ {\em symmetric} if $t=t_{\sigma}$ holds for any permutation $\sigma$.

By the {\em meet (join) symmetrization} of a term $t$ we mean $\bigwedge_{\pi}t:=\bigwedge_{\sigma\in S_n}t_{\sigma}$ ($\bigvee_{\pi}t:=\bigvee_{\sigma\in S_n}t_{\sigma}$). The letter $\pi$ here does not stand for anything particular, it means ``for all permutations''. Clearly, a term is symmetric if and only if it coincides with its meet (join) symmetrization.

The symmetric terms of $\FL(n)$ form a sublattice denoted by $\SL(n)$. (We do not define $\SL(\omega)$, as there are no symmetric terms in $\FL(\omega)$.) By \cite{czedli2019symmetric}, $\SL(n)$ is not finitely generated for $n\geq 3$.

A lattice identity $t\approx s$ is symmetric if both $t$ and $s$ are symmetric. The symmetrizations of an identity $t\approx s$ are $\bigwedge_{\pi}t\approx\bigwedge_{\pi}s$ and $\bigvee_{\pi}t\approx\bigvee_{\pi}s$. Note that these are consequences of the original identity. It is possible that they are strictly weaker than the original, for example, the meet symmetrization of the distributive law $x_1\wedge(x_2\vee x_3)\approx(x_1\wedge x_2)\vee(x_1\wedge x_3)$ will be a trivial identity.

For a lattice variety $\mathcal{V}$, the {\em symmetric part} of $\mathcal{V}$ is the variety axiomatized by the symmetric identities true in $\mathcal{V}$. This is denoted by $\mathcal{V}^{sym}$. For a natural $k$, by $\mathcal{V}_k^{sym}$ we denote the variety axiomatized by the symmetric identities of arity at most $k$ true in $\mathcal{V}$.

\begin{conjecture}
Each lattice variety $\mathcal{V}$ equals its symmetric part. 
\end{conjecture}

There is a stronger variant of this conjecture.

\begin{conjecture}
For any lattice variety axiomatized by identities of arity at most $k$, $\mathcal{V}=\mathcal{V}_k^{sym}$ holds.
\end{conjecture}

In this paper, we are concerned about the varieties $\mathcal{V}_3^{sym}$ with $\mathcal{V}$ being a variety generated by a finite lattice. In other words, we study the $3$-ary symmetric identities satisfied by a finite lattice.

Let $\SL^*(n):=\SL(n)\backslash\{x_1\wedge\dots\wedge x_n,x_1\vee \dots\vee x_n\}$. In \cite{czedli2019symmetric} we proved that any element $t\in\SL^*(n)$ is a near unanimity term, that is, on any lattice it induces a function satisfying \[
t(x,x,\dots,x,y)=t(x,x,\dots,y,x)=\dots=t(y,x,\dots,x)=x
\] for all $x$ and $y$. This suggests the following definition.

\begin{definition}
Let $\alg L$ be a lattice. A mapping $f:\,L^3\to L$ that is a symmetric monotone majority operation is called a {\em median} of $\alg L$. A median is {\em inner} if it is also a term function, and {\em outer} otherwise. The medians of $\alg L$ form a sublattice of $\alg L^{L^3}$, this will be called the {\em outer median lattice} of $\alg L$, and denoted by $\OM\alg L$. In this, the inner medians form a sublattice, this {\em inner median lattice} will be denoted by $\IM\alg L$.
\end{definition}

We call the kernel of the natural homomorphism from $\SL^*(3)$ to $\IM\alg L$ (mapping a term to its induced function) the {\em $\alg L$-cut} of $\SL^*(3)$, or the cut induced by $\alg L$. Notice that if $\mathcal{V}$ is the variety generated by $\alg L$, then the $\alg L$-cut contains the same information as $\mathcal{V}_3^{sym}$. Consequently, lattices with non-isomorphic inner median lattices can be distinguished by a $3$-ary symmetric identity. In particular:

\begin{proposition}
\label{generated_variety_determines_im}
If two lattices generate the same variety, then their inner median lattices are isomorphic.
\end{proposition}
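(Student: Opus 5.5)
The plan is to identify $\IM\alg L$ with the quotient of $\SL^*(3)$ by the $\alg L$-cut, and then show that this cut depends only on the variety generated by $\alg L$. By definition, the map $\varphi_{\alg L}\colon \SL^*(3)\to\IM\alg L$ sending a term to its induced function is a lattice homomorphism. This holds because joins and meets of term functions are computed pointwise, exactly as in the sublattice $\IM\alg L\le\alg L^{L^3}$. The map is also surjective: every inner median is, by definition, a term function that is a symmetric majority operation, and so it should be induced by some $t\in\SL^*(3)$. By the homomorphism theorem, $\IM\alg L\cong\SL^*(3)/\ker\varphi_{\alg L}$.

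Surjectivity needs a short argument. Suppose $f$ is induced by a ternary term $s$ and $f$ is symmetric. Then $f$ is also induced by the meet symmetrization $\bigwedge_\pi s$, since $s_\sigma$ induces $f$ for every $\sigma$ and meets of identical functions agree with $f$. So $f$ is induced by a symmetric term $t\in\SL(3)$. Next, $t$ cannot be $x_1\wedge x_2\wedge x_3$ or $x_1\vee x_2\vee x_3$. Whenever $|L|\ge 2$, these terms induce functions that fail the majority law: taking $x<y$, we get $t(x,x,y)=x$ for the meet but $t(y,y,x)=x\neq y$. For the trivial lattice there is nothing to prove, since all such lattices have a one-element $\IM$.

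Now I compare kernels. Let $\alg L$ and $\alg K$ generate the same variety $\mathcal V$. For $t,s\in\SL^*(3)$, the pair $(t,s)$ lies in $\ker\varphi_{\alg L}$ exactly when $\alg L\models t\approx s$. This in turn holds exactly when $t\approx s$ holds in $\mathcal V=\mathbf{HSP}(\alg L)$, because identities are preserved by $\mathbf H$, $\mathbf S$ and $\mathbf P$. The same holds for $\alg K$. Hence $\ker\varphi_{\alg L}=\ker\varphi_{\alg K}$, and therefore $\IM\alg L\cong\SL^*(3)/\ker\varphi_{\alg L}=\SL^*(3)/\ker\varphi_{\alg K}\cong\IM\alg K$.

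The only real obstacle is the surjectivity step, namely that every inner median comes from an element of $\SL^*(3)$ rather than merely from some term. The meet symmetrization argument above handles it. The remaining steps are standard universal algebra.
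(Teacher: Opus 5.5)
Your proof is correct and follows the paper's reasoning. The paper likewise treats $\IM\alg L$ as the image of $\SL^*(3)$ under the natural homomorphism, whose kernel (the $\alg L$-cut) is determined by the $3$-ary symmetric identities of the generated variety; you additionally make explicit, via meet symmetrization and the failure of the majority law for $x_1\wedge x_2\wedge x_3$ and $x_1\vee x_2\vee x_3$, the surjectivity of $\SL^*(3)\to\IM\alg L$ that the paper takes for granted.
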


It may happen that lattices with isomorphic inner median lattices induce different cuts.

By \cite{czedli2019symmetric}, the smallest element of $\SL^*(3)$ is $m:=(x_1\wedge x_2)\vee(x_1\wedge x_3)\vee(x_2\wedge x_3)$ and the largest is its dual, $M:=(x_1\vee x_2)\wedge(x_1\vee x_3)\wedge(x_2\vee x_3)$. We call these (and the functions induced by these) the {\em lower and upper medians}, respectively. The lower median is also the smallest element in the outer median lattice of any $\alg L$, because if $t\in\OM\alg L$, then by the monotonicity and majority of $t$, 
\[
t(l_1,l_2,l_3)\geq t(l_1\wedge l_2,l_1\wedge l_2,l_3)=l_1\wedge l_2,
\]
likewise $t(l_1,l_2,l_3)\geq l_1\wedge l_3$ and $t(l_1,l_2,l_3)\geq l_2\wedge l_3$, hence $t(l_1,l_2,l_3)\geq m(l_1,l_2,l_3)$. Similarly, $M$ is the largest element of the outer median lattice of any $\alg L$.

\section{On the outer median lattice}

\begin{definition}
Let $\alg L$ be a lattice. For each triple $(l_1,l_2,l_3)\in L^3$, we call \newline $[m(l_1,l_2,l_3),M(l_1,l_2,l_3)]$ the {\em permitted interval} of this triple.

The poset $\mathcal{T}_{\alg L}$ (called the {\em T-poset} of $\alg L$) is defined the following way: its underlying set is the set of three-element subsets of $\alg L$ having nontrivial permitted interval. The order on $\mathcal{T}_{\alg L}$ is defined by 
\[
(l_1,l_2,l_3)\leq(k_1,k_2,k_3)\,\Leftrightarrow\,\exists\,\sigma\in S_{\{1,2,3\}}:\,\forall\, 1\leq i\leq 3:\,l_i\leq k_{\sigma(i)}.
\]

An order homomorphism $\mathcal{T}_{\alg L}\to\alg L$ is called {\em permitted} if it maps each triple to an element of its permitted interval.
\end{definition}

There is a natural bijection between the medians of $\alg L$ and the permitted homomorphisms. (By Theorem 2.51 of \cite{mckenzie1987algebras} $\mathcal{T}_{\alg L}$ is empty if and only if $\alg L$ is distributive. In this case, the empty set is such a homomorphism, and indeed, $\OM\alg L$ is trivial.)

When calculating $\mathcal{T}_{\alg L}\to\alg L$, it is helpful to remember that all triples in it contain either three pairwise incomparable elements, or two comparable elements and a third that is incomparable to the other two.

With the above description of medians, we can characterize the lattices with the next fewest medians.

\begin{theorem}
\label{th:two_outer_medians}
For a finite lattice $\alg L$, the following are equivalent.
\begin{enumerate}
    \item $|\OM\alg L|\leq 2$,
    \item $\OM\alg L=\IM\alg L$,
    \item of the $3$-generated sublattices of $\alg L$, at most one is nondistributive, and if there is a nondistributive $3$-generated sublattice, then it is isomorphic to $\alg N_5$. 
\end{enumerate}
\end{theorem}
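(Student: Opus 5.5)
We would prove the cycle (1)$\Rightarrow$(2), (3)$\Rightarrow$(1), and $\neg$(3)$\Rightarrow\neg$(2), using the description of $\OM\alg L$ as the lattice of permitted homomorphisms $\mathcal{T}_{\alg L}\to\alg L$.

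For (1)$\Rightarrow$(2): by the observation in Section 2, $m$ is the least and $M$ the greatest element of $\OM\alg L$, and both are term functions, so $m,M\in\IM\alg L$. If $|\OM\alg L|\le 2$, then $\OM\alg L=\{m,M\}\subseteq\IM\alg L\subseteq\OM\alg L$.

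For (3)$\Rightarrow$(1): first note that if a triple $\{l_1,l_2,l_3\}$ generates a distributive sublattice, then the median law holds there and $m(l_1,l_2,l_3)=M(l_1,l_2,l_3)$. Hence every element of $\mathcal{T}_{\alg L}$ generates a nondistributive $3$-generated sublattice. Under (3), all of $\mathcal{T}_{\alg L}$ therefore lies in a single copy of $\alg N_5=\{0,a,b,c,1\}$ with $a<c$ and $b$ incomparable to both, and it is generated by each of its triples. A direct check of the triples of $\alg N_5$ shows that only $\{a,b,c\}$ has a nontrivial permitted interval, namely $[a,c]=\{a,c\}$. So $\mathcal{T}_{\alg L}$ has at most one element, there are at most two permitted homomorphisms, and $|\OM\alg L|\le 2$.

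For $\neg$(3)$\Rightarrow\neg$(2), we would construct an outer median. There are two cases.

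\emph{Case A: some triple $\tau=\{l_1,l_2,l_3\}$ generates a nondistributive sublattice $\alg S$ not isomorphic to $\alg N_5$.} The key tool is that term values are invariant under automorphisms. If $t$ is a symmetric term and $\varphi$ is an automorphism of $\alg S$ permuting $\{l_1,l_2,l_3\}$, then $t(\tau)\in S$ and $\varphi(t(\tau))=t(\varphi\tau)=t(\tau)$. In the $\alg M_3$ situation (three atoms), we define $f(\tau)=l_1$, $f=M$ on all triples strictly above $\tau$, and $f=m$ elsewhere. One checks that $f$ is a permitted homomorphism, but it is not inner because the automorphism swapping $l_1,l_2$ would move $l_1$. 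In general we aim to pick a value in the permitted interval of $\tau$ that is not fixed by the symmetry group of $\alg S$, or, if $\alg S$ has no such symmetry, a value outside the set $\{m(\tau),M(\tau)\}$ that still lies in the interval. For the latter we need a finer analysis of which elements of $\alg S$ are values of symmetric terms at the generators.

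\emph{Case B: there are two distinct nondistributive $3$-generated sublattices, each isomorphic to $\alg N_5$, generated by triples $\tau\neq\tau'$.} Let $F={\uparrow}\tau$ in $\mathcal{T}_{\alg L}$ with $\tau$ chosen minimal, and set $f=M$ on $F$ and $f=m$ off $F$. Since $F$ is an up-set and $m\le M$ are both monotone, $f$ is monotone, hence a median. If we can choose $\tau'\notin F$, then $m<f<M$ and $|\OM\alg L|\ge 3$. We must then show that $f$ is not a term function. We would argue that any symmetric term is determined on an $\alg N_5$-generating triple by whether it equals $m$ or $M$ there, and that this choice is uniform across copies of $\alg N_5$ related by the natural isomorphism of generators, since term functions commute with such isomorphisms. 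If $\tau$ and $\tau'$ are incomparable, this gives the contradiction. If every pair of such triples is comparable, we would instead pick $F$ to separate them, and try to use a homomorphism from the sublattice generated by $\tau\cup\tau'$ to derive the contradiction.

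We expect Case B to be the main obstacle: showing that the separating median is not induced by a term when the two $\alg N_5$ copies are related in complicated ways inside $\alg L$. There the automorphism-invariance argument must be replaced by a uniformity argument for symmetric terms across isomorphic generating triples.
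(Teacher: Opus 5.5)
Your $(1)\Rightarrow(2)$ is fine. The proof breaks down in Case A of $\neg(3)\Rightarrow\neg(2)$, and the problem is not just missing detail. Your plan only inspects the triple $\tau$ itself: either automorphisms of the sublattice $\alg S$ it generates, or a value of its permitted interval other than $m(\tau)$ and $M(\tau)$. This cannot work in general. If $\tau$ contains a comparable pair, the only automorphism permuting $\tau$ is the identity, and the interval may have just two elements.

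Concretely, let $\alg L=\alg S$ be the nine-element lattice freely generated by $x<z$ and $y$. It has no $\alg M_3$ and exactly one $\alg N_5$, generated by $\tau_0=\{x\vee(y\wedge z),\,y,\,z\wedge(x\vee y)\}$. Moreover, $\mathcal T_{\alg L}$ consists of four triples, each containing a comparable pair and each with the same two-element permitted interval $\{x\vee(y\wedge z),\,z\wedge(x\vee y)\}$. So neither your Case A nor your Case B yields anything, although $|\OM\alg L|=6$ and $\IM\alg L=\{m,M\}$. For instance, the median equal to $M$ on ${\uparrow}\{x,y,z\}$ and to $m$ elsewhere is outer. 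The reason is that $\alg S\in V(\alg N_5)$: every term function that is a median coincides with $m$ or with $M$ on all of $\alg S$, exactly as on $\alg N_5$, so no term function separates $\{x,y,z\}$ from $\tau_0$. Your fallback of taking a value strictly inside the interval is not a criterion either, unless you know which values term functions can attain there, and that is precisely what you leave open.

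The paper closes this case by a variety-theoretic reduction rather than by analysing $\alg S$.
\begin{itemize}
\item If $\alg M_3\in V(\alg L)$, J\'onsson's Lemma gives a sublattice $\alg L'$ and $\mu\in\Con\alg L'$ with $\alg L'/\mu\cong\alg M_3$. Pick $l_1,l_2,l_3$ whose classes are the three atoms, and take $l_1^*=(m\vee l_1)\wedge M$ evaluated at $(l_1,l_2,l_3)$. Your $\alg M_3$ construction with value $l_1^*$ is then outer, because term functions commute with the quotient map and hence land in the class of $m$ or of $M$. This handles $\alg M_3\in HS(\alg L)$, not only an $\alg M_3$ generated by $\tau$.
\item If $\alg M_3\notin V(\alg L)$, then $V(\alg S)$ is nonmodular.
  \begin{itemize}
  \item If $V(\alg S)=V(\alg N_5)$, the $\alg S$-cut equals the $\alg N_5$-cut, and your Case B argument runs with $\tau$ and an $\alg N_5$-triple inside $\alg S$, as in the example.
  \item Otherwise $V(\alg S)$ contains one of the J\'onsson--Rival covers $V(\alg L_1),\dots,V(\alg L_{15})$ of $V(\alg N_5)$. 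Each $\alg L_i$ has two $\alg N_5$ sublattices, and these lift to two $\alg N_5$'s in $\alg S$.
  \end{itemize}
\end{itemize}

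Two smaller repairs are also needed.
\begin{itemize}
\item In Case B, comparability is no obstacle. The order on $\mathcal T_{\alg L}$ is antisymmetric, so for $\tau\neq\tau'$ you may assume $\tau\not\le\tau'$ and take $F={\uparrow}\tau$. Your uniformity argument then finishes, with no need for a homomorphism from the sublattice generated by $\tau\cup\tau'$.
\item In $(3)\Rightarrow(1)$, the permitted interval must be computed in $\alg L$, not in $\alg N_5$. So you need $a\prec c$ in $\alg L$. This holds because any $a<c'<c$ would make $\{a,b,c'\}$ generate a second $\alg N_5$.
\end{itemize}
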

\begin{proof}
{\color{white}a}
\begin{itemize}
    \item $(1)\Rightarrow(2)$: As the smallest and largest elements of $\OM\alg L$ are inner medians, this is obvious.
    
    \item $(2)\Rightarrow(3)$: Suppose first $\alg M_3$ is in the variety generated by $\alg L$. As $\alg L$ is finite, this implies by J\'onsson's Lemma that $\alg L$ has a sublattice $\alg L'$ that has a congruence $\mu$ so that $\alg L'\slash\mu\cong\alg M_3$. Choose $l_1,l_2,l_3\in L'$ so that
    \[
    [l_1\slash\mu,l_2\slash\mu,l_3\slash\mu]=\alg L'\slash\mu,
    \] 
    and define 
    \[
    l^*_1:=(m(l_1,l_2,l_3)\vee l_1)\wedge M(l_1,l_2,l_3).
    \] 
    Now $l^*_1$ is in the $\mu$-class of $l_1$, and it is in the permitted interval of $\{l_1,l_2,l_3\}$. We define the mapping 
    \[
    \mathcal{T}_{\alg L}\to\alg L:\,\{k_1,k_2,k_3\}\mapsto\begin{cases}
        M(k_1,k_2,k_3),\,\text{if $\{k_1,k_2,k_3\}>\{l_1,l_2,l_3\}$}\\
        l^*_1,\,\text{if $\{k_1,k_2,k_3\}=\{l_1,l_2,l_3\}$},\\
        m(k_1,k_2,k_3),\,\text{if $\{k_1,k_2,k_3\}\not\geq\{l_1,l_2,l_3\}$}
    \end{cases},
    \] 
    this is a permitted homomorphism, and it corresponds to an outer median, as an inner median must map $(l_1,l_2,l_3)$ to the class of either $m(l_1,l_2,l_3)$ or $M(l_1,l_2,l_3)$.
    
    Now suppose that $(l_1,l_2,l_3)$ and $(l_4,l_5,l_6)$ both generate sublattices isomorphic to $\alg N_5$. We can assume that $(l_1,l_2,l_3)\not\leq(l_4,l_5,l_6)$, and define the mapping 
    \[
    \mathcal{T}_{\alg L}\to\alg L:\,\{k_1,k_2,k_3\}\mapsto\begin{cases}
        M(k_1,k_2,k_3),\,\text{if $\{k_1,k_2,k_3\}\geq\{l_1,l_2,l_3\}$}\\
        m(k_1,k_2,k_3),\,\text{if $\{k_1,k_2,k_3\}\not\geq\{l_1,l_2,l_3\}$}
    \end{cases},
    \] 
    again, this is a permitted homomorphism. The corresponding median maps $\{l_1,l_2,l_3\}$ to $M(l_1,l_2,l_3)$ and $\{l_4,l_5,l_6\}$ to $m(l_4,l_5,l_6)$, and so must be an outer median.
    
    By Proposition \ref{generated_variety_determines_im}, the above argument also works if we only assume both $(l_1,l_2,l_3)$ and $(l_4,l_5,l_6)$ to generate a lattice that generates the same variety as $\alg N_5$.
    
    There is one outstanding case: when there are elements $l_1,l_2,l_3$ generating a sublattice $\alg S$ that is nondistributive, but generates a different variety than $\alg N_5$. That variety is contained in the one generated by $\alg L$, so it cannot contain $\alg M_3$. By \cite{jipsen2006varieties} (citing \cite{jonsson1979lattice}), it must cover one of fifteen subdirectly irreducible lattices ($\alg L_1,\dots,\alg L_{15}$). By J\'onsson's Lemma, this means that there is a $1\leq i\leq 15$ so that $\alg L_i$ is isomorphic to $\alg T\slash\nu$ for a sublattice $\alg T$ of $\alg S$ and a congruence $\nu$ of $\alg T$.
    
    It is easy to check that all the $\alg L_i$ contain at least two sublattices isomorphic to $\alg N_5$. Notice that if $t_1\slash\nu$, $t_2\slash\nu$ and $t_3\slash\nu$ generate a sublattice of $\alg T\slash\nu$ isomorphic to $\alg N_5$, then there are elements $t'_1,t'_2,t'_3\in T$ such that $t'_j\slash\nu=t_j\slash\nu$ for all $j=1,2,3$, and $t_1$, $t_2$ and $t_3$ generate a sublattice of $\alg T$ isomorphic to $\alg N_5$. (If $t_1\slash\nu<t_3\slash\nu$ in $\alg T\slash\nu$, then let $t'_3$ be the smallest element of the $\nu$-class of $t_3$, $t'_1$ the largest element of $t_1\slash\nu$ smaller than $t'_3$, and $t'_2=t_2$.) Thus, $\alg T$, and so $\alg L$, contains at least two sublattices isomorphic to $\alg N_5$. As we have seen, this means that there is an outer median of $\alg L$.
    
    \item $(3)\Rightarrow(1)$: If $\alg L$ is distributive, then $\OM\alg L$ is trivial. Otherwise, $\mathcal{T}_{\alg L}$ has only one element, denote it by $\{a,b,c\}$ with $a<c$. Notice that if $a<c'<c$, then $\{a,b,c'\}$ also generates a sublattice isomorphic to $\alg N_5$, contradicting the assumption. Therefore $a\prec c$, so the permitted interval of the only element of $\mathcal{T}_{\alg L}$ is a two-element interval. Hence, $|\OM\alg L|=2$.
\end{itemize}
\end{proof}

There is no constraint on the size of the outer median lattice, as the family of lattices $\alg E_n$ seen on Figure \ref{n medians} shows. Note that the T-poset of $\alg E_n$ is a chain of $n-1$ elements, each having $[a,b]$ as its permitted interval. Hence the permitted homomorphisms of $\alg E_n$ map elements of a (possibly empty) ideal of this chain to $a$, and elements of the complement filter to $b$. Thus, $\alg E_n$ has precisely $n$ medians, and $\OM\alg E_n$ is a chain.

Not all lattices will be isomorphic to an outer median lattice, an example is the six-element lattice $\alg M_4$ having four pairwise incomparable elements.

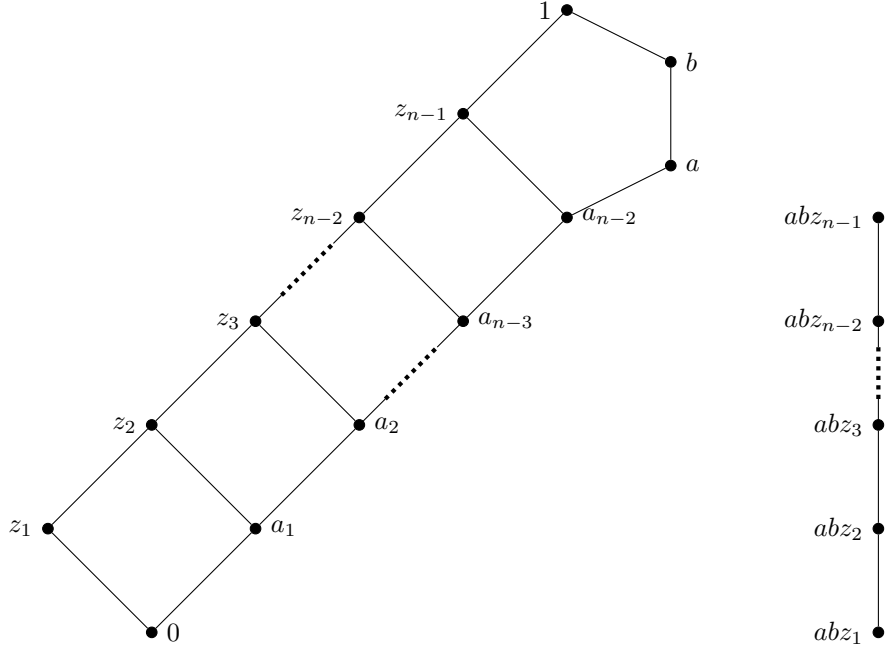
\begin{figure}
\tikzstyle{B} = [circle,draw=black,fill=black,minimum size=4pt,inner sep=0pt]
\tikzstyle{W} = [circle,draw=white,fill=white,minimum size=4pt,inner sep=0pt]
\tikzstyle{G} = [circle,draw=black,fill=green,minimum size=4pt,inner sep=0pt]
\tikzstyle{R} = [circle,draw=black,fill=red,minimum size=4pt,inner sep=0pt]

\begin{tikzpicture}[scale=0.69,auto=left]
  \draw (0,0) node[B] (0) [label=right:$0$] {};
  \draw (-2,2) node[B] (z1) [label=left:$z_1$] {};
  \draw (2,2) node[B] (a1) [label=right:$a_1$] {};
  \draw (0,4) node[B] (z2) [label=left:$z_2$] {};
  \draw (4,4) node[B] (a2) [label=right:$a_2$] {};
  \draw (2,6) node[B] (z3) [label=left:$z_3$] {};
  \draw (6,6) node[B] (an3) [label=right:$a_{n-3}$] {};
  \draw (4,8) node[B] (zn2) [label=left:$z_{n-2}$] {};
  \draw (6,10) node[B] (zn1) [label=left:$z_{n-1}$] {};
  \draw (8,8) node[B] (an2) [label=right:$a_{n-2}$] {};
  \draw (10,9) node[B] (a) [label=right:$a$] {};
  \draw (10,11) node[B] (b) [label=right:$b$] {};
  \draw (8,12) node[B] (1) [label=left:$1$] {};

 \draw (14,0) node[B] (00) [label=left:$abz_1$] {};
  \draw (14,2) node[B] (01) [label=left:$abz_2$] {};
  \draw (14,4) node[B] (02) [label=left:$abz_3$] {};
  \draw (14,6) node[B] (03) [label=left:$abz_{n-2}$] {};
  \draw (14,8) node[B] (04) [label=left:$abz_{n-1}$] {};

\foreach \from/\to in {0/z1, a1/z2, a2/z3, an3/zn2, an2/zn1, 0/a1, z1/z2, z2/z3, a1/a2, an3/an2, zn2/zn1, an2/a, a/b, b/1, zn1/1, 00/01, 01/02,03/04}
    \draw[-] (\from) -- (\to);
    \draw[-] (2,6) -- (2.5,6.5);
    \draw[-] (3.5,7.5) -- (4,8);
    \draw[-] (4,4) -- (4.5,4.5);
    \draw[-] (5.5,5.5) -- (6,6);
    \draw[-] (14,4) -- (14,4.5);
    \draw[-] (14,5.5) -- (14,6);
    \draw[ultra thick, dotted] (14,4.5) -- (14,5.5);
    \draw[ultra thick, dotted] (2.5,6.5) -- (3.5,7.5);
    \draw[ultra thick, dotted] (4.5,4.5) -- (5.5,5.5);
\end{tikzpicture}
\caption{The lattice $\alg E_{n}$ with $n$ medians and its T-poset}
\label{n medians}
\end{figure}

Clearly, $\OM\alg L$ is in the variety generated by $\alg L$. A little more can be said.

\begin{definition}
Let $\mathcal{V}_1,\mathcal{V}_2$ be lattice varieties. Then the {\em Mal'tsev product} $\mathcal{V}_1\circ\mathcal{V}_2$ is defined by 
\[
\alg L\in\mathcal{V}_1\circ\mathcal{V}_2\,\Leftrightarrow\,\exists\,\theta\in\Con\alg L:\,\alg L\slash\theta\in\mathcal{V}_2,\,\text{all the $\theta$-classes are in $\mathcal{V}_1$}
\]

In the case when $\mathcal{V}_1$ is the variety of distributive lattices, we use the notation $\mathcal{V}_2^d$ instead of $\mathcal{V}_1\circ\mathcal{V}_2$. 
\end{definition}

Note that we are using the restricted Mal'tsev product, i.e. we require elements of $\mathcal{V}_1\circ\mathcal{V}_2$ to be lattices, and not only algebras of the same similarity type otherwise satisfying the condition.

The Mal'tsev product of two lattice varieties is a quasivariety \cite{gratzer1985products}. It can be, but it rarely is, a variety, in particular $\var V^d$ cannot be a variety if $\var V$ is not modular \cite{harrison1985p}.

\begin{definition}
For any lattice $\alg L$, define the congruence $\theta^d\in\Con\alg L$ as the congruence generated by all the pairs $(m(a,b,c),M(a,b,c))$, where $a,b,c\in L$.
\end{definition}

Note that $\theta^d$ is the smallest congruence of the lattice such that the corresponding factor lattice is distributive.

\begin{proposition}
\label{p:smallest_dist_cong}
For any lattice variety $\mathcal{V}$ and lattice $\alg L$, 
\[
\alg L\in\mathcal{V}^d\,\Leftrightarrow\,\text{all the $\theta^d$-classes of $\alg L$ are in $\mathcal{V}$}.
\]
\end{proposition}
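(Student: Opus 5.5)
The proof rests on the remark after the definition of $\theta^d$: it is the smallest congruence of $\alg L$ whose quotient is distributive. I would first justify this remark. The quotient $\alg L\slash\theta^d$ satisfies the median law $m\approx M$: for classes $a\slash\theta^d,b\slash\theta^d,c\slash\theta^d$ we have
\[
m(a,b,c)\slash\theta^d=m(a\slash\theta^d,b\slash\theta^d,c\slash\theta^d)
\]
and the same for $M$, and $(m(a,b,c),M(a,b,c))\in\theta^d$ by definition. So $\alg L\slash\theta^d$ is distributive. Conversely, suppose $\theta$ is a congruence with $\alg L\slash\theta$ distributive. Then $m(a,b,c)\slash\theta=M(a,b,c)\slash\theta$ for all $a,b,c$, so every generating pair of $\theta^d$ lies in $\theta$, and hence $\theta^d\leq\theta$.

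For ($\Leftarrow$), I take $\theta:=\theta^d$ in the definition of $\mathcal{V}^d$. The quotient lies in the variety of distributive lattices, and by hypothesis every $\theta^d$-class is in $\mathcal{V}$. Hence $\alg L\in\mathcal{V}^d$.

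For ($\Rightarrow$), let $\theta$ be a witness: $\alg L\slash\theta$ is distributive and every $\theta$-class lies in $\mathcal{V}$. By minimality, $\theta^d\leq\theta$, so each $\theta^d$-class $C$ is contained in a single $\theta$-class $D$. The key observation is that congruence classes of a lattice are convex sublattices, so $C$ is a sublattice of $\alg L$. Since $C\subseteq D$, it is also a sublattice of $D$. Varieties are closed under taking sublattices, so $D\in\mathcal{V}$ gives $C\in\mathcal{V}$, as required.

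None of these steps is really an obstacle; the only point needing care is verifying the minimality of $\theta^d$. That verification is exactly the observation that a quotient is distributive if and only if it satisfies the median law, a characterization the paper already uses in the introduction.
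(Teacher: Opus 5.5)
Your proof is correct and follows the paper's argument: for ($\Rightarrow$) the paper also obtains $\theta^d\leq\theta$ because $m$ and $M$ coincide in the distributive quotient, and concludes that $\theta^d$-classes are sublattices of $\theta$-classes; the paper calls ($\Leftarrow$) trivial, and you additionally check that $\alg L\slash\theta^d$ is distributive. Like the paper's proof, you read $\alg L\in\mathcal{V}^d$ as ``some congruence has a distributive quotient and all its classes in $\mathcal{V}$''. That is the reading the statement needs, although the paper's definition, taken literally, puts the distributivity requirement on the classes instead.
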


\begin{proof}
Suppose that $\alg L\in\mathcal{V}^d$, then there is a congruence $\theta$ of $\alg L$ such that $\alg L\slash\theta$ is distributive and the $\theta$-classes are in $\mathcal{V}$. As the lower and upper medians of $\alg L\slash\theta$ coincide: \[
m(a,b,c)\slash\theta=m(a\slash\theta,b\slash\theta,c\slash\theta)=M(a\slash\theta,b\slash\theta,c\slash\theta)=M(a,b,c)\slash\theta
\] for all $a,b,c\in L$, and hence $\theta^d\leq\theta$. Therefore, the $\theta^d$-classes are sublattices of $\theta$-classes, and thus are in $\mathcal{V}$.

The other direction is trivial.
\end{proof}

\begin{theorem}
Suppose that $\mathcal{V}$ is a lattice variety, and $\alg L\in\mathcal{V}^d$. Then $\OM\alg L\in\mathcal{V}$.
\end{theorem}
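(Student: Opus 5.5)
Since $\alg L\in\mathcal{V}^d$, Proposition \ref{p:smallest_dist_cong} tells us that every $\theta^d$-class of $\alg L$ is in $\mathcal{V}$. The plan is to embed $\OM\alg L$ into a product of lattices each of which is (a sublattice of) a $\theta^d$-class. Then $\OM\alg L\in\mathcal{V}$, because varieties are closed under sublattices and products.

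The key observation concerns the permitted interval. For every triple $(a,b,c)\in L^3$, the interval $[m(a,b,c),M(a,b,c)]$ is contained in a single $\theta^d$-class. Indeed, $(m(a,b,c),M(a,b,c))\in\theta^d$ by the definition of $\theta^d$, and congruence classes are convex. So let $C_{(a,b,c)}$ denote the $\theta^d$-class containing $m(a,b,c)$. By the inequalities $m\le t\le M$ established earlier for all $t\in\OM\alg L$, every median $t$ satisfies
\[
t(a,b,c)\in[m(a,b,c),M(a,b,c)]\subseteq C_{(a,b,c)}.
\]

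Now consider the map
\[
\Phi:\OM\alg L\to\prod_{(a,b,c)\in L^3}C_{(a,b,c)},\qquad t\mapsto\bigl(t(a,b,c)\bigr)_{(a,b,c)\in L^3}.
\]
This map is well defined by the previous paragraph. It is injective, since a function on $L^3$ is determined by its values. It is a lattice homomorphism, because the operations of $\OM\alg L$ are computed pointwise: $\OM\alg L$ is a sublattice of $\alg L^{L^3}$. Each $C_{(a,b,c)}$ is a sublattice of $\alg L$ lying in $\mathcal{V}$. Hence the product lies in $\mathcal{V}$, and therefore so does its sublattice $\Phi(\OM\alg L)\cong\OM\alg L$.

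None of these steps is difficult. The one point needing care is that the product is taken over the sublattices $C_{(a,b,c)}$ rather than over $\alg L$ itself, since $\alg L$ need not lie in $\mathcal{V}$. One could equally use the intervals $[m(a,b,c),M(a,b,c)]$ as the factors: each is a sublattice of a $\theta^d$-class, hence also in $\mathcal{V}$.
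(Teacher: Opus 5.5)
Your proof is correct and follows the paper's argument: you bound each median by $m\le t\le M$, note that the permitted intervals (or the $\theta^d$-classes containing them) lie in $\mathcal{V}$ by Proposition~\ref{p:smallest_dist_cong}, and embed $\OM\alg L$ into their product. You actually spell out a step the paper leaves implicit, namely that $[m(a,b,c),M(a,b,c)]$ sits inside a single $\theta^d$-class because congruence classes are convex.
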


\begin{proof}
Suppose that $f\in\OM\alg L$, and $a,b,c\in L$. As $f$ is monotone and a majority operation, $f(a,b,c)\geq f(a\wedge b,a\wedge b,c)=a\wedge b$, $f(a,b,c)\geq f(a\wedge c,b,a\wedge c)=a\wedge c$, and $f(a,b,c)\geq f(a,b\wedge c,b\wedge c)=b\wedge c$. Thus $f(a,b,c)\geq m(a,b,c)$, and similarly, $f(a,b,c)\leq M(a,b,c)$. So, any median of $\alg L$ maps the triple $(a,b,c)$ to the interval $[m(a,b,c),M(a,b,c)]$. So \[
\OM\alg L\leq\Pi_{(a,b,c)\in L^3}[m(a,b,c),M(a,b,c)].
\] By Proposition \ref{p:smallest_dist_cong}, the lattice $[m(a,b,c),M(a,b,c)]$ is in $\mathcal{V}$, and therefore $\OM\alg L$ is also in $\mathcal{V}$.
\end{proof}

The converse is not true. Freese and McKenzie's example in \cite{freese2017maltsev} shows this: each permitted interval of the lattice $\alg M_{33}$ is either trivial or isomorphic to $\alg M_3$, so $\OM(\alg M_{33})\in V(\alg M_3)$. But $\alg M_{33}$ is simple, so it would only be in $V(\alg M_3)^d$ if it would be in $V(\alg M_3)$, which is not the case by J\'onsson's Lemma \cite{foundation}.

\section{Outer and inner median lattices of small lattices}

For a given finite lattice $\alg L$, we usually calculate $\OM\alg L$ by calculating its T-poset, then the permitted intervals and the permitted homomorphisms. However, sometimes we can simplify this process.

By the {\em linear sum} of the lattices $\alg L_1$ and $\alg L_2$, we mean the lattice with underlying set $(L_1\times\{1\})\cup(L_2\times\{2\})$ and order 
\[
(l,i)\leq (k,j)\,\Leftrightarrow\,i<j\,\vee(i=j\,\wedge\,l\leq k).
\] 
Usually we implicitly assume $L_1$ and $L_2$ to be disjoint and we omit the second coordinates.

By {\em gluing} $\alg L_2$ to $\alg L_1$, we mean the lattice obtained by factoring the linear sum of $\alg L_1$ and $\alg L_2$ with the congruence generated by $((1_{\alg L_1},1),(0_{\alg L_2},2))$, which clearly only contains singletons and a single two-element class.

\begin{proposition}
\label{glued_lattices}
If $\alg L$ is either the linear sum of the lattices $\alg L_1$ and $\alg L_2$, or is obtained by gluing $\alg L_2$ to $\alg L_1$, then 
\[
\OM\alg L\cong\OM\alg L_1\times\OM\alg L_2
\]
\end{proposition}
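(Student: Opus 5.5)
We would work entirely with the T-poset description: medians of a lattice correspond bijectively to permitted order homomorphisms $\mathcal{T}_{\alg L}\to\alg L$. The plan is to show that $\mathcal{T}_{\alg L}$ is the disjoint union of (copies of) $\mathcal{T}_{\alg L_1}$ and $\mathcal{T}_{\alg L_2}$, and that every triple of the $\alg L_1$-part lies strictly below every triple of the $\alg L_2$-part. We will also show that the permitted intervals computed in $\alg L$ agree with those computed in $\alg L_i$. The monotonicity constraints across the two parts will then be automatic, and the permitted homomorphisms split as pairs. Since the lattice operations in $\OM\alg L$ are pointwise, this bijection is a lattice isomorphism.

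First we identify $\mathcal{T}_{\alg L}$. In both constructions $\alg L_1$ and $\alg L_2$ embed as sublattices: as an ideal and a filter in the linear sum, and as an ideal and a filter sharing one element when gluing. Every element of the ideal is comparable to, and below, every element of the filter. A triple with nontrivial permitted interval cannot contain two comparable elements together with a third element comparable to both. Indeed, the triple then generates a distributive sublattice, so $m=M$. Equivalently, if some element of the triple is comparable with the other two, the permitted interval is trivial: say $a$ is comparable to $b$ and $c$ with $a\le b,c$, then $m(a,b,c)=a\vee(b\wedge c)=b\wedge c$ and $M(a,b,c)=(a\vee b)\wedge(a\vee c)\wedge(b\vee c)=b\wedge c$, and the other orderings are similar. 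A triple meeting both $L_1\setminus L_2$ and $L_2\setminus L_1$ always contains such an element. In the glued case, a triple containing the glue point also has such an element. Hence every element of $\mathcal{T}_{\alg L}$ lies in $L_1$ or in $L_2$. Because $\alg L_i$ is a sublattice, $m$ and $M$ computed in $\alg L$ coincide with those computed in $\alg L_i$, and so do the permitted intervals. The order on triples is defined by the order of elements, so it restricts correctly. Finally, for triples $T_1\subseteq L_1$ and $T_2\subseteq L_2$ we have $T_1\le T_2$, taking any $\sigma$, since every element of $L_1$ is at most every element of $L_2$. The reverse $T_2\le T_1$ can hold only when all elements equal the glue point, which is impossible for a three-element set.

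Next we check that pairing works. Given permitted homomorphisms $h_i\colon\mathcal{T}_{\alg L_i}\to\alg L_i$, define $h=h_1\cup h_2$. The map $h$ is permitted, and it is monotone within each part. Across the parts we need $h_1(T_1)\le h_2(T_2)$, which holds because $h_1(T_1)\in L_1$, $h_2(T_2)\in L_2$, and $L_1\le L_2$ elementwise. Conversely, the restrictions of a permitted $h$ are permitted homomorphisms into $\alg L_i$, since the permitted intervals lie in $L_i$. If either T-poset is empty, the corresponding factor $\OM\alg L_i$ is trivial and the argument still goes through.

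For the lattice structure, note that $\OM\alg L$ is a sublattice of $\alg L^{L^3}$, and a median is determined by its values on $\mathcal{T}_{\alg L}$; elsewhere it is forced to equal $m=M$. Joins and meets of medians are therefore computed pointwise on $\mathcal{T}_{\alg L}$, and likewise in $\OM\alg L_1\times\OM\alg L_2$. The bijection $h\mapsto(h|_{\mathcal{T}_1},h|_{\mathcal{T}_2})$ consequently preserves both operations. The main obstacle, though a mild one, is the first step: carefully verifying that mixed triples, and triples involving the glue point, have trivial permitted interval. The lemma that a triple with an element comparable to both others has $m=M$ settles this.
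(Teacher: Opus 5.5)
Your proposal is correct and is essentially the paper's proof. Mixed triples, and in the glued case triples containing the glue point, have an element comparable to the other two, hence $m=M$, which gives $\mathcal{T}_{\alg L}=\mathcal{T}_{\alg L_1}\cup\mathcal{T}_{\alg L_2}$; and since every permitted value on the $\alg L_1$-part lies below every permitted value on the $\alg L_2$-part, permitted homomorphisms of $\alg L$ are exactly unions of pairs of permitted homomorphisms of $\alg L_1$ and $\alg L_2$.

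You also make explicit several points the paper leaves tacit: convexity of $L_i$, so the permitted intervals agree; disjointness of the two parts; the restriction direction; and compatibility with the pointwise operations. One wording slip: your opening phrase ``two comparable elements together with a third comparable to both'' literally describes only chains. The lemma you actually verify and use, that some element comparable to the other two forces $m=M$, is the correct one.
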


\begin{proof}

If $(k_1,k_2,k_3)\in(L_1\cup L_2)^3\backslash(L_1^3\cup L_2^3)$, then there is either a smallest or a largest element in the set $\{k_1,k_2,k_3\}$, which means $\{k_1,k_2,k_3\}\not\in\mathcal{T}_{\alg L}$. Therefore, 
\[
\mathcal{T}_{\alg L}=\mathcal{T}_{\alg L_1}\cup\mathcal{T}_{\alg L_2}.
\] 
Any element of the permitted interval of some triple in $\mathcal{T}_{\alg L_1}$ is smaller or equal than any element of the permitted interval of some triple in $\mathcal{T}_{\alg L_2}$. Thus, the set-theoretic join of a permitted homomorphism of $\alg L_1$ and a permitted homomorphism of $\alg L_2$ is a permitted homomorphism of $\alg L$, and we are ready. 
\end{proof}

The naive way of calculating $\IM\alg L$ is harder than calculating $\OM\alg L$. It involves determining the set of ternary terms of $\alg L$, that is, computing the subalgebra of $\alg L^{L^3}$ generated by the projections. We have two general ways of reducing this calculation.

\begin{lemma}
    Suppose that $\alg L$ is a subdirect product $\alg L\leq_{sd}\alg L_1\times\alg L_2$, and denote the $\alg L_i$-cut by $\theta_i$ ($i=1,2$). Then the $\alg L$-cut is $\theta_1\wedge\theta_2$.
\end{lemma}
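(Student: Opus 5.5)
We have to show that two terms $t,s\in\SL^*(3)$ induce the same operation on $\alg L$ exactly when they induce the same operation on both $\alg L_1$ and $\alg L_2$. Since the cuts are kernels of maps out of the same lattice $\SL^*(3)$, the claim $\theta_{\alg L}=\theta_1\wedge\theta_2$ is precisely the equivalence
\[
t^{\alg L}=s^{\alg L}\iff t^{\alg L_1}=s^{\alg L_1}\text{ and }t^{\alg L_2}=s^{\alg L_2}.
\]
I would prove the two inclusions separately, using only the fact that term operations commute with homomorphisms and are computed coordinatewise in direct products.

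For $\theta_{\alg L}\leq\theta_1\wedge\theta_2$, suppose $t^{\alg L}=s^{\alg L}$. Fix $i\in\{1,2\}$ and $a,b,c\in L_i$. By subdirectness the projection $p_i\colon\alg L\to\alg L_i$ is surjective, so we can pick preimages $\bar a,\bar b,\bar c\in L$. Since $p_i$ is a lattice homomorphism, it commutes with term operations. Hence
\[
t^{\alg L_i}(a,b,c)=p_i\bigl(t^{\alg L}(\bar a,\bar b,\bar c)\bigr)=p_i\bigl(s^{\alg L}(\bar a,\bar b,\bar c)\bigr)=s^{\alg L_i}(a,b,c).
\]
This gives $(t,s)\in\theta_i$ for both $i$.

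For the reverse inclusion, suppose $t$ and $s$ agree on $\alg L_1$ and on $\alg L_2$. Take any $u,v,w\in L$, written $u=(u_1,u_2)$ and so on. Since $\alg L$ is a sublattice of $\alg L_1\times\alg L_2$, the term operation on $\alg L$ is the restriction of the one on the product, which is computed coordinatewise:
\[
t^{\alg L}(u,v,w)=\bigl(t^{\alg L_1}(u_1,v_1,w_1),\,t^{\alg L_2}(u_2,v_2,w_2)\bigr).
\]
The same holds for $s$, so the two right-hand sides coincide and $t^{\alg L}=s^{\alg L}$.

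I do not expect a real obstacle. The only point needing care is the first inclusion, where surjectivity of the projections, that is, the subdirectness hypothesis, is used to lift arbitrary triples of $\alg L_i$ into $\alg L$. Without subdirectness that inclusion could fail, since $\alg L$ need not see all of $\alg L_i$.
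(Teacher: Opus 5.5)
Your proof is correct and is essentially the paper's argument. The paper says that $\alg L_1,\alg L_2$ lie in the variety generated by $\alg L$, and that $\alg L$ lies in the variety generated by $\alg L_1$ and $\alg L_2$, so $s_1\approx s_2$ holds in $\alg L$ iff it holds in both factors; you have unpacked these two facts directly, via the surjective projections for the first and coordinatewise evaluation in the sublattice of the product for the second.
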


\begin{proof}
    Take terms $s_1,s_2\in\SL^*(3)$. Then $s_1$ and $s_2$ are in the same class of the $\theta$-cut iff $s_1\approx s_2$ holds in $\alg L$. This happens precisely if $s_1\approx s_2$ holds in both $\alg L_1$ and $\alg L_2$, as $\alg L_1$ and $\alg L_2$ are in the variety generated by $\alg L$ and $\alg L$ is in the variety generated by $\alg L_1$ and $\alg L_2$. Hence, $(s_1,s_2)\in\theta$ iff both $(s_1,s_2)\in\theta_1$ and $(s_1,s_2)\in\theta_2$.
\end{proof}

Note that this includes the cases when $\alg L$ is the linear sum of $\alg L_1$ and $\alg L_2$, or is obtained by their gluing.

\begin{lemma}
    Suppose that $C$ is the set of $3$-generated sublattices of $\alg L$. For each $\alg L_i\in C$, let $\theta_i$ be the $\alg L_i$-cut. Then the $\alg L$-cut is $\bigwedge_{\alg L_i\in C}\theta_i$.
\end{lemma}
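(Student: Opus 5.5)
The plan is to compare the two kernels directly, as in the proof of the previous lemma. Take $s_1,s_2\in\SL^*(3)$. By definition, $(s_1,s_2)$ lies in the $\alg L$-cut if and only if the identity $s_1\approx s_2$ holds in $\alg L$, that is, $s_1(a,b,c)=s_2(a,b,c)$ for every triple $(a,b,c)\in L^3$. Likewise, $(s_1,s_2)\in\theta_i$ if and only if $s_1\approx s_2$ holds in $\alg L_i$. So it suffices to show that a ternary identity holds in $\alg L$ exactly when it holds in every $3$-generated sublattice of $\alg L$.

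The first direction is immediate. Each $\alg L_i\in C$ is a sublattice of $\alg L$, so it satisfies every identity of $\alg L$. Hence the $\alg L$-cut is contained in each $\theta_i$, and therefore in $\bigwedge_{\alg L_i\in C}\theta_i$.

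For the converse, suppose $s_1\approx s_2$ holds in every $\alg L_i\in C$, and take any $a,b,c\in L$. Let $\alg S$ be the sublattice generated by $\{a,b,c\}$. It is $3$-generated: if some of $a,b,c$ coincide, we simply list a generator twice, so $\alg S$ still belongs to $C$ under the natural reading of ``$3$-generated'' as generated by at most three elements. Term operations commute with taking sublattices, so
\[
s_1^{\alg L}(a,b,c)=s_1^{\alg S}(a,b,c)=s_2^{\alg S}(a,b,c)=s_2^{\alg L}(a,b,c).
\]
Since $(a,b,c)$ was arbitrary, $s_1\approx s_2$ holds in $\alg L$. This gives the reverse inclusion, so the $\alg L$-cut equals $\bigwedge_{\alg L_i\in C}\theta_i$.

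I do not expect a real obstacle. The only point needing care is the convention that $C$ also contains the sublattices generated by one or two elements, so that every triple, including degenerate ones, is covered. Alternatively, one can note that on degenerate triples all elements of $\SL^*(3)$ agree anyway: they are near unanimity terms by \cite{czedli2019symmetric}, so both sides return the repeated entry.
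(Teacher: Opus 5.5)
Your proof is correct and follows essentially the same route as the paper. Both show that an identity $s_1\approx s_2$ holds in $\alg L$ if and only if it holds in every sublattice $[l_1,l_2,l_3]$, using that sublattices inherit identities and that each triple is evaluated inside the sublattice it generates; your extra remark on degenerate triples (via the near unanimity property) is a harmless refinement the paper leaves implicit.
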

\begin{proof}
    Take terms $s_1,s_2\in\SL^*(3)$. If $(s_1,s_2)\in\theta$, then $s_1\approx s_2$ holds in $\alg L$, therefore it also holds on $\alg L_i$ for all $\alg L_i\in C$. So $(s_1,s_2)\in\theta_i$.

    Now assume that $(s_1,s_2)\in\theta_i$ for all $i$. Take arbitrary elements $l_1,l_2,l_3\in L$. Then the sublattice $\alg K=[l_1,l_2,l_3]$ of $\alg L$ is in $C$, thus $(s_1,s_2)$ is in the cut corresponding to $\alg K$. Hence, $s_1\approx s_2$ holds in $\alg K$, so $s_1(l_1,l_2,l_3)=s_2(l_1,l_2,l_3)$. Therefore, $s_1\approx s_2$ also holds in $\alg L$, thus $(s_1,s_2)\in\theta$. 
\end{proof}

Now in order to calculate the inner median lattices for all lattices of a fixed size, it is enough to determine the cuts corresponding to the $3$-generated subdirectly irreducible lattices of at most that size. We will do this for lattices of size at most $6$. Then we will only have to deal with the following $3$-generated subdirectly irreducible lattices:
\begin{enumerate}
    \item $\alg M_3$: this has five medians, which are determined by the image of the triple $abc$. If this image is $a$, $b$, or $c$, then the median is outer, as a term function needs to commute with any automorphism of the lattice. The other two medians are inner, as they are induced by the lower and upper medians, respectively. Note that by \cite{czedli2019symmetric}, the upper block of the $\alg M_3$-cut does not have a largest element, nor does the lower block have a largest.
    \item $\alg N_5$: this has a two-element inner median lattice by Theorem \ref{th:two_outer_medians}. By \cite{gyenizse2023symmetric}, the lowest element of the upper block of the $\alg N_5$-cut is $\bigvee_{\pi}(x\wedge(z\vee(x\wedge y)))$, and the largest element of the lower block is $t(x,y,z):=\bigwedge_{\pi}(x\vee(y\wedge(x\vee z)))$.
    \item $\alg L_4$: $\mathcal{T}_{\alg L_4}$ is a three-element poset: it has smallest element $abc$ with permitted interval $\{0,a,b,d\}$, and two incomparable elements, $acd$ and $bcd$ with permitted intervals $\{a,d\}$ and $\{b,d\}$, respectively. The outer median lattice of $\alg L_4$ turns out to be isomorphic to $\mathbf{3}^2$.
    
    The smallest and largest elements of $\alg L_4$, $0ab$ and $ddd$ are necessarily inner medians. Consider the term \[
    t(x,y,z):=\bigwedge_{\pi}(x\vee(y\wedge(x\vee z))),
    \] which we obtained from the $\alg N_5$-cut. As $\overline{t}(a,b,c)=0$ and $\overline{t}(a,c,d)=\overline{t}(b,c,d)=d$ hold, $\overline{t}$ ensures that $0dd$ is an inner median.
    
    In $\alg L_4$, both $\{a,c,d\}$ and $\{b,c,d\}$ generate sublattices isomorphic to $\alg N_5$. An inner median must act the same on both of these, so if must map either both or neither to $d$. Hence, $0db$, $0ad$, $bdb$ and $aad$ are outer medians.
    
    Finally, as $\alg L_4$ has an automorphism swapping $a$ and $b$, and $\{a,b,c\}$ is invariant under this, any inner median must map $\{a,b,c\}$ to an element fixed by this automorphism. Therefore, $bdd$ and $add$ are outer medians, and $\IM\alg L_4\cong\mathbf{3}$.
    \item $\alg L_5$: As it is the dual of $\alg L_4$, $\OM\alg L_5\cong\mathbf{3}^2$ and $\IM\alg L_5\cong\mathbf{3}$.
\end{enumerate}

The outer and inner median lattices of $6$ elements lattice are described in Table ~\ref{6elementsTable}.
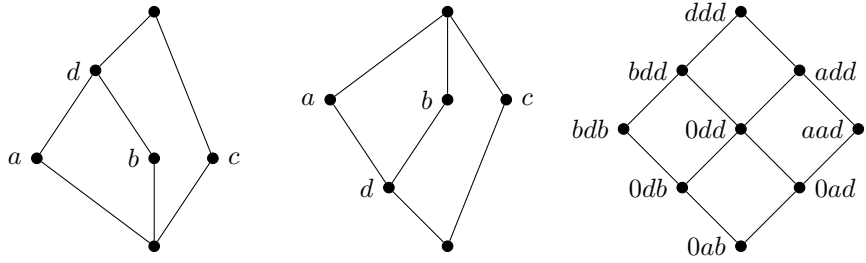
\begin{figure}[h]
\tikzstyle{B} = [circle,draw=black,fill=black,minimum size=4pt,inner sep=0pt]
\tikzstyle{W} = [circle,draw=white,fill=white,minimum size=4pt,inner sep=0pt]
\tikzstyle{G} = [circle,draw=black,fill=green,minimum size=4pt,inner sep=0pt]
\tikzstyle{R} = [circle,draw=black,fill=red,minimum size=4pt,inner sep=0pt]

\begin{tikzpicture}[scale=0.39,auto=left]
  \draw (14,8) node[B] (0) [label=right:] {};
  \draw (10,5) node[B] (a) [label=left:$a$] {};
  \draw (14,5) node[B] (b) [label=left:$b$] {};
  \draw (16,5) node[B] (c) [label=right:$c$] {};
  \draw (12,2) node[B] (d) [label=left:$d$] {};
  \draw (14,0) node[B] (1) [label=left:] {};
  \draw (4,0) node[B] (0d) [label=right:] {};
  \draw (0,3) node[B] (ad) [label=left:$a$] {};
  \draw (4,3) node[B] (bd) [label=left:$b$] {};
  \draw (6,3) node[B] (cd) [label=right:$c$] {};
  \draw (2,6) node[B] (dd) [label=left:$d$] {};
  \draw (4,8) node[B] (1d) [label=left:] {};
  \draw (24,0) node[B] (0ab) [label=left:$0ab$] {};
  \draw (22,2) node[B] (0db) [label=left:$0db$] {};
  \draw (26,2) node[B] (0ad) [label=right:$0ad$] {};
  \draw (20,4) node[B] (bdb) [label=left:$bdb$] {};
  \draw (24,4) node[B] (0dd) [label=left:$0dd$] {};
  \draw (28,4) node[B] (aad) [label=left:$aad$] {};
  \draw (22,6) node[B] (bdd) [label=left:$bdd$] {};
  \draw (26,6) node[B] (add) [label=right:$add$] {};
  \draw (24,8) node[B] (ddd) [label=left:$ddd$] {};

\foreach \from/\to in {0/a,0/b,0/c,a/d,b/d,d/1,c/1,0d/ad,0d/bd,0d/cd,ad/dd,bd/dd,dd/1d,cd/1d,0ab/0db,0ab/0ad,0db/bdb,0db/0dd,0ad/0dd,0ad/aad,bdb/bdd,0dd/bdd,0dd/add,aad/add,bdd/ddd,add/ddd}
    \draw[-] (\from) -- (\to);
\end{tikzpicture}
\caption{The lattices $\alg L_4$ and $\alg L_5$, and the poset $\mathcal{T}_{\alg L_4}$}
\end{figure}

\section{Symmetric characterization of modularity}

The $\alg N_5$-cut naturally yields a symmetric identity characterizing modularity:

\begin{theorem}
A lattice is modular if and only if it satisfies the inequality \[
\bigvee_{\pi}(x\wedge(z\vee(x\wedge y)))\leq\bigwedge_{\pi}(x\vee(z\wedge(x\vee y)))
\]
\end{theorem}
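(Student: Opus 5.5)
The plan is to prove the two directions separately. Write
\[
u:=\bigvee_{\pi}(x\wedge(z\vee(x\wedge y))),\qquad v:=\bigwedge_{\pi}(x\vee(z\wedge(x\vee y))).
\]

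For the forward direction, assume the lattice is modular. Since the modular law $a\le c\Rightarrow a\vee(b\wedge c)=(a\vee b)\wedge c$ is self-dual, we can simplify each permuted term. With $x\wedge y\le x$, modularity gives $x\wedge(z\vee(x\wedge y))=(x\wedge y)\vee(x\wedge z)$. Hence $u$ is the join of all $x_i\wedge x_j$, that is, $u=m(x,y,z)$. Dually, $x\vee(z\wedge(x\vee y))=(x\vee y)\wedge(x\vee z)$, so $v=M(x,y,z)$. The inequality then reduces to $m\le M$, which holds in every lattice: each $x_i\wedge x_j$ lies below every $x_k\vee x_l$, because the pairs $\{i,j\}$ and $\{k,l\}$ share an index.

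For the converse, the key fact is that a lattice is nonmodular iff it contains a sublattice isomorphic to $\alg N_5$ (Dedekind). Inequalities are preserved by sublattices, so it suffices to show that the inequality fails in $\alg N_5$. Label $\alg N_5$ as $0<a<c<1$ with $b$ incomparable to $a$ and $c$. We evaluate $u$ and $v$ at the triple $(a,b,c)$, and we only need one permutation $\sigma$ and one $\tau$ such that $u_\sigma\not\le v_\tau$ there.

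Take $x=c$, $y=a$, $z=b$ in the first term: $c\wedge(b\vee(c\wedge a))=c\wedge(b\vee a)=c\wedge 1=c$, so $u(a,b,c)\ge c$. For the second term, take $x=a$, $z=b$, $y=c$: $a\vee(b\wedge(a\vee c))=a\vee(b\wedge c)=a\vee 0=a$, so $v(a,b,c)\le a$. Since $c\not\le a$, this gives $u\not\le v$ in $\alg N_5$.

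This agrees with the description of the $\alg N_5$-cut in the paper: $u$ is the least element of the upper block, so it induces $M$ on $\alg N_5$, while the upper bound term is the dual lying in the lower block.

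The only step needing care is this explicit witness computation, in particular choosing the permutations correctly given the roles of $x,y,z$ in each term. The rest is standard modular-law manipulation together with Dedekind's $\alg N_5$ criterion.
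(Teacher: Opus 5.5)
Your proof is correct and follows the paper's argument. In a modular lattice both sides collapse via the modular law to $m(x,y,z)\le M(x,y,z)$; conversely, the inequality fails in $\alg N_5$ and hence in every nonmodular lattice. The only difference is that you check the failure in $\alg N_5$ by an explicit witness computation ($u(a,b,c)\ge c$, $v(a,b,c)\le a$), whereas the paper infers it from the cited description of the $\alg N_5$-cut, so your version is self-contained.
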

\begin{proof}
    Note that the term on the left hand side is the smallest element of the upper block of the $\alg N_5$-cut, and the term on the right hand side is the largest element of the lower block. This means that the inequality does not hold in $\alg N_5$, hence it does hold in any nonmodular lattice.

    On the other hand, in a modular lattice, 
    \[
    x\wedge(z\vee(x\wedge y))\approx (x\wedge z)\vee(x\wedge y)
    \] 
    holds by definition. By symmetrization, then 
    \[
    \bigvee_{\pi}(x\wedge(z\vee(x\wedge y)))\approx\bigvee_{\pi}((x\wedge z)\vee(x\wedge y))\approx \bigvee_{\pi}(x\wedge y),
    \] 
    the second identity holding in any lattice. Similarly, 
    \[
    \bigwedge_{\pi}(x\vee(z\wedge(x\vee y)))\approx\bigwedge_{\pi}(x\vee y),
    \] 
    and as 
    \[
    \bigvee_{\pi}(x\wedge y)\leq\bigwedge_{\pi}(x\vee y)
    \] 
    again holds in any lattice, we are done.
\end{proof}

\begin{table}[]

\begin{tabular}{|m{0.9cm}|m{2.1cm}|m{2.8cm}|m{2.5cm}|m{1.4cm}|}

 \hline
Name & Lattice &    ${\color{white}aaaaaa}\mathcal{T}_L$   &$\OM L$ & $\IM L$\\
\hline

$\alg M_3$ & 
\tikzstyle{B} = [circle,draw=black,fill=black,minimum size=2pt,inner sep=0pt]
\tikzstyle{W} = [circle,draw=white,fill=white,minimum size=4pt,inner sep=0pt]
\tikzstyle{G} = [circle,draw=black,fill=green,minimum size=4pt,inner sep=0pt]
\tikzstyle{R} = [circle,draw=black,fill=red,minimum size=4pt,inner sep=0pt]

\begin{tikzpicture}[scale=0.14,auto=left]
  \draw (3,0) node[B] (0) [label=right:] {};
  \draw (0,3) node[B] (a) [label=left:{\scriptsize $a$}] {};
  \draw (3,3) node[B] (b) [label=left:{\scriptsize $b$}] {};
  \draw (6,3) node[B] (c) [label=right:{\scriptsize $c$}] {};
  \draw (3,6) node[B] (d) [label=left:{\scriptsize }] {};

\foreach \from/\to in {0/a,0/b,0/c,a/d,b/d,c/d}
    \draw[-] (\from) -- (\to);
\end{tikzpicture} &   
\tikzstyle{B} = [circle,draw=black,fill=black,minimum size=2pt,inner sep=0pt]
\tikzstyle{W} = [circle,draw=white,fill=white,minimum size=4pt,inner sep=0pt]
\tikzstyle{G} = [circle,draw=black,fill=green,minimum size=4pt,inner sep=0pt]
\tikzstyle{R} = [circle,draw=black,fill=red,minimum size=4pt,inner sep=0pt]

\begin{tikzpicture}[scale=0.14,auto=left]
  \draw (0,3) node[B] (a) [label=left:{\scriptsize $abc$}] {};
    \draw (-9,7) node[W] [] {};

\end{tikzpicture} & $\alg M_3$ & $\mathbf{2}$ \\
 \hline

 $\alg N_5$ & \tikzstyle{B} = [circle,draw=black,fill=black,minimum size=2pt,inner sep=0pt]
\tikzstyle{W} = [circle,draw=white,fill=white,minimum size=4pt,inner sep=0pt]
\tikzstyle{G} = [circle,draw=black,fill=green,minimum size=4pt,inner sep=0pt]
\tikzstyle{R} = [circle,draw=black,fill=red,minimum size=4pt,inner sep=0pt]

\begin{tikzpicture}[scale=0.14,auto=left]
  \draw (2,0) node[B] (0) [label=right:] {};
  \draw (0,2) node[B] (a) [label=left:{\scriptsize $a$}] {};
  \draw (4,3) node[B] (b) [label=right:{\scriptsize $b$}] {};
  \draw (0,4) node[B] (c) [label=left:{\scriptsize $c$}] {};
  \draw (2,6) node[B] (d) [label=left:{\scriptsize }] {};

\foreach \from/\to in {0/a,0/b,a/c,b/d,c/d}
    \draw[-] (\from) -- (\to);
\end{tikzpicture} &  
\tikzstyle{B} = [circle,draw=black,fill=black,minimum size=2pt,inner sep=0pt]
\tikzstyle{W} = [circle,draw=white,fill=white,minimum size=4pt,inner sep=0pt]
\tikzstyle{G} = [circle,draw=black,fill=green,minimum size=4pt,inner sep=0pt]
\tikzstyle{R} = [circle,draw=black,fill=red,minimum size=4pt,inner sep=0pt]

\begin{tikzpicture}[scale=0.14,auto=left]
  \draw (0,3) node[B] (a) [label=left:{\scriptsize $abc$}] {};
    \draw (-9,7) node[W] [] {};

\end{tikzpicture}
& $\alg 2$ & $\mathbf{2}$ \\
 \hline

$\alg M_4$ & 
\tikzstyle{B} = [circle,draw=black,fill=black,minimum size=2pt,inner sep=0pt]
\tikzstyle{W} = [circle,draw=white,fill=white,minimum size=4pt,inner sep=0pt]
\tikzstyle{G} = [circle,draw=black,fill=green,minimum size=4pt,inner sep=0pt]
\tikzstyle{R} = [circle,draw=black,fill=red,minimum size=4pt,inner sep=0pt]

\begin{tikzpicture}[scale=0.14,auto=left]
  \draw (4,0) node[B] (0) [label=right:] {};
  \draw (-1,4) node[B] (a) [label=left:{\scriptsize $a$}] {};
  \draw (2.5,4) node[B] (b) [label=left:{\scriptsize $b$}] {};
  \draw (5.5,4) node[B] (c) [label=right:{\scriptsize $c$}] {};
  \draw (9,4) node[B] (d) [label=right:{\scriptsize $d$}] {};
  \draw (4,8) node[B] (e) [label=left:] {};

\foreach \from/\to in {0/a,0/b,0/c,0/d,a/e,b/e,c/e,d/e}
    \draw[-] (\from) -- (\to);
\end{tikzpicture}
   &    
   \tikzstyle{B} = [circle,draw=black,fill=black,minimum size=2pt,inner sep=0pt]
\tikzstyle{W} = [circle,draw=white,fill=white,minimum size=4pt,inner sep=0pt]
\tikzstyle{G} = [circle,draw=black,fill=green,minimum size=4pt,inner sep=0pt]
\tikzstyle{R} = [circle,draw=black,fill=red,minimum size=4pt,inner sep=0pt]

\begin{tikzpicture}[scale=0.14,auto=left]
  \draw (0,3) node[B] (a) [label=left:{\scriptsize $abc$}] {};
  \draw (5,3) node[B] (a) [label=left:{\scriptsize $abd$}] {};
  \draw (10,3) node[B] (a) [label=left:{\scriptsize $acd$}] {};
  \draw (15,3) node[B] (a) [label=left:{\scriptsize $bcd$}] {};

\end{tikzpicture}
   &$\alg M_4^4$ & $\mathbf{2}$ \\
   \hline

      $\alg A_1$   & \tikzstyle{B} = [circle,draw=black,fill=black,minimum size=2pt,inner sep=0pt]
\tikzstyle{W} = [circle,draw=white,fill=white,minimum size=4pt,inner sep=0pt]
\tikzstyle{G} = [circle,draw=black,fill=green,minimum size=4pt,inner sep=0pt]
\tikzstyle{R} = [circle,draw=black,fill=red,minimum size=4pt,inner sep=0pt]

\begin{tikzpicture}[scale=0.14,auto=left]
  \draw (3.5,0) node[B] (0) [label=right:] {};
  \draw (1,2) node[B] (a) [label=left:{\scriptsize $a$}] {};
  \draw (1,4) node[B] (c) [label=left:{\scriptsize $c$}] {};
  \draw (3.5,3) node[B] (b) [label=right:{\scriptsize $b$}] {};
  \draw (7,3) node[B] (d) [label=right:{\scriptsize $d$}] {};
  \draw (3.5,6) node[B] (1) [label=left:] {};

\foreach \from/\to in {0/a,a/c,c/1,0/b,0/d,b/1,d/1}
    \draw[-] (\from) -- (\to);
\end{tikzpicture} &   
\tikzstyle{B} = [circle,draw=black,fill=black,minimum size=2pt,inner sep=0pt]
\tikzstyle{W} = [circle,draw=white,fill=white,minimum size=4pt,inner sep=0pt]
\tikzstyle{G} = [circle,draw=black,fill=green,minimum size=4pt,inner sep=0pt]
\tikzstyle{R} = [circle,draw=black,fill=red,minimum size=4pt,inner sep=0pt]

\begin{tikzpicture}[scale=0.14,auto=left]
  \draw (0,3) node[B] (a) [label=left:{\scriptsize $abc$}] {};
  \draw (6,3) node[B] (b) [label=left:{\scriptsize $acd$}] {};
  \draw (12,3) node[B] (c) [label=left:{\scriptsize $abd$}] {};
  \draw (12,7) node[B] (d) [label=left:{\scriptsize $bcd$}] {};

\foreach \from/\to in {c/d}
    \draw[-] (\from) -- (\to);
\end{tikzpicture}
&$\alg 2^2\times \alg A_1^\mathbf{2}$ & $\mathbf{2}^2$ \\
\hline

$\alg A_2$ & \tikzstyle{B} = [circle,draw=black,fill=black,minimum size=2pt,inner sep=0pt]
\tikzstyle{W} = [circle,draw=white,fill=white,minimum size=4pt,inner sep=0pt]
\tikzstyle{G} = [circle,draw=black,fill=green,minimum size=4pt,inner sep=0pt]
\tikzstyle{R} = [circle,draw=black,fill=red,minimum size=4pt,inner sep=0pt]

\begin{tikzpicture}[scale=0.14,auto=left]
  \draw (2,0) node[B] (0) [label=right:] {};
  \draw (0,2) node[B] (a) [label=left:{\scriptsize $a$}] {};
  \draw (4,2) node[B] (b) [label=right:{\scriptsize $b$}] {};
  \draw (0,4) node[B] (c) [label=left:{\scriptsize $c$}] {};
  \draw (4,4) node[B] (d) [label=right:{\scriptsize $d$}] {};
  \draw (2,6) node[B] (1) [label=left:] {};
  \draw (2,7) node[W] (11) [label=left:] {};

\foreach \from/\to in {0/a,a/c,c/1,0/b,b/d,d/1}
    \draw[-] (\from) -- (\to);
\end{tikzpicture}

&  

\tikzstyle{B} = [circle,draw=black,fill=black,minimum size=2pt,inner sep=0pt]
\tikzstyle{W} = [circle,draw=white,fill=white,minimum size=4pt,inner sep=0pt]
\tikzstyle{G} = [circle,draw=black,fill=green,minimum size=4pt,inner sep=0pt]
\tikzstyle{R} = [circle,draw=black,fill=red,minimum size=4pt,inner sep=0pt]

\begin{tikzpicture}[scale=0.14,auto=left]
  \draw (0,5) node[W] (a) [] {};
  \draw (0,0) node[B] (a) [label=left:{\scriptsize $abc$}] {};
  \draw (0,4) node[B] (b) [label=left:{\scriptsize $acd$}] {};
  \draw (12,0) node[B] (c) [label=left:{\scriptsize $abd$}] {};
  \draw (12,4) node[B] (d) [label=left:{\scriptsize $bcd$}] {};
    \draw (0,6) node[W] [] {};

\foreach \from/\to in {c/d,a/b}
    \draw[-] (\from) -- (\to);
\end{tikzpicture}

&$\mathbf{3}^2$ & $\mathbf{2}$\\
\hline
$\alg A_3$ &\tikzstyle{B} = [circle,draw=black,fill=black,minimum size=2pt,inner sep=0pt]
\tikzstyle{W} = [circle,draw=white,fill=white,minimum size=4pt,inner sep=0pt]
\tikzstyle{G} = [circle,draw=black,fill=green,minimum size=4pt,inner sep=0pt]
\tikzstyle{R} = [circle,draw=black,fill=red,minimum size=4pt,inner sep=0pt]

\begin{tikzpicture}[scale=0.14,auto=left]
  \draw (3,0) node[B] (0) [label=right:] {};
  \draw (1,2) node[B] (a) [label=left:{\scriptsize $a$}] {};
  \draw (5.5,4) node[B] (b) [label=right:{\scriptsize $b$}] {};
  \draw (1,4) node[B] (c) [label=left:{\scriptsize $c$}] {};
  \draw (1,6) node[B] (d) [label=left:{\scriptsize $d$}] {};
  \draw (3,8) node[B] (1) [label=left:] {};

\foreach \from/\to in {0/a,a/c,c/d,d/1,0/b,b/1}
    \draw[-] (\from) -- (\to);
\end{tikzpicture} 
&   

\tikzstyle{B} = [circle,draw=black,fill=black,minimum size=2pt,inner sep=0pt]
\tikzstyle{W} = [circle,draw=white,fill=white,minimum size=4pt,inner sep=0pt]
\tikzstyle{G} = [circle,draw=black,fill=green,minimum size=4pt,inner sep=0pt]
\tikzstyle{R} = [circle,draw=black,fill=red,minimum size=4pt,inner sep=0pt]

\begin{tikzpicture}[scale=0.14,auto=right]

  \draw (9,0) node[B] (a) [label=left:{\scriptsize $abc$}] {};
  \draw (9,3) node[B] (c) [label=left:{\scriptsize $abd$}] {};
  \draw (9,6) node[B] (d) [label=left:{\scriptsize $bcd$}] {};
  \draw (0,8) node[white] (x) [label=left:] {};

\foreach \from/\to in {a/c,c/d}
    \draw[-] (\from) -- (\to);
\end{tikzpicture}

 &  \tikzstyle{B} = [circle,draw=black,fill=black,minimum size=2pt,inner sep=0pt]
\tikzstyle{W} = [circle,draw=white,fill=white,minimum size=4pt,inner sep=0pt]
\tikzstyle{G} = [circle,draw=black,fill=green,minimum size=4pt,inner sep=0pt]
\tikzstyle{R} = [circle,draw=black,fill=red,minimum size=4pt,inner sep=0pt]

\begin{tikzpicture}[scale=0.14,auto=left]

  \draw (14,0) node[B] (aab) [label=right:] {};
  \draw (12,2) node[B] (aac) [label=left:] {};
  \draw (16,2) node[B] (abb) [label=right:] {};
  \draw (14,4) node[B] (abc) [label=left:] {};
  \draw (12,6) node[B] (acc) [label=left:] {};
  \draw (18,4) node[B] (bbb) [label=right:] {};
  \draw (16,6) node[B] (bbc) [label=right:] {};
  \draw (14,8) node[B] (bcc) [label=right:] {};

\foreach \from/\to in {aab/aac,aab/abb,aac/abc,abb/abc,abb/bbb,abc/acc,abc/bbc,bbb/bbc,acc/bcc,bbc/bcc}
    \draw[-] (\from) -- (\to);
\end{tikzpicture}   & $\mathbf{2}$\\
\hline
 $\alg L_4$ & \tikzstyle{B} = [circle,draw=black,fill=black,minimum size=2pt,inner sep=0pt]
\tikzstyle{W} = [circle,draw=white,fill=white,minimum size=4pt,inner sep=0pt]
\tikzstyle{G} = [circle,draw=black,fill=green,minimum size=4pt,inner sep=0pt]
\tikzstyle{R} = [circle,draw=black,fill=red,minimum size=4pt,inner sep=0pt]

\begin{tikzpicture}[scale=0.15,auto=left]
  \draw (14,6) node[B] (0) [label=right:] {};
  \draw (11,4) node[B] (a) [label=left:{\scriptsize $a$}] {};
  \draw (14,4) node[B] (b) [label=left:{\scriptsize $b$}] {};
  \draw (16,4) node[B] (c) [label=right:{\scriptsize $c$}] {};
  \draw (12.5,2) node[B] (d) [label=left:{\scriptsize $d$}] {};
  \draw (14,0) node[B] (1) [label=left:] {};

\foreach \from/\to in {0/a,0/b,0/c,a/d,b/d,d/1,c/1}
    \draw[-] (\from) -- (\to);
\end{tikzpicture} &

\tikzstyle{B} = [circle,draw=black,fill=black,minimum size=2pt,inner sep=0pt]
\tikzstyle{W} = [circle,draw=white,fill=white,minimum size=4pt,inner sep=0pt]
\tikzstyle{G} = [circle,draw=black,fill=green,minimum size=4pt,inner sep=0pt]
\tikzstyle{R} = [circle,draw=black,fill=red,minimum size=4pt,inner sep=0pt]

\begin{tikzpicture}[scale=0.14,auto=left]

  \draw (0,0) node[B] (a) [label=left:{\scriptsize $acd$}] {};
  \draw (3,3) node[B] (c) [label=left:{\scriptsize $abc$}] {};
  \draw (6,0) node[B] (d) [label=right:{\scriptsize $bcd$}] {};

\foreach \from/\to in {a/c,c/d}
    \draw[-] (\from) -- (\to);
\end{tikzpicture}

& 

\tikzstyle{B} = [circle,draw=black,fill=black,minimum size=2pt,inner sep=0pt]
\tikzstyle{W} = [circle,draw=white,fill=white,minimum size=4pt,inner sep=0pt]
\tikzstyle{G} = [circle,draw=black,fill=green,minimum size=4pt,inner sep=0pt]
\tikzstyle{R} = [circle,draw=black,fill=red,minimum size=4pt,inner sep=0pt]

\begin{tikzpicture}[scale=0.16,auto=left]

  \draw (14,0) node[B] (aab) [label=left:{\scriptsize $0ab$}] {};
  \draw (12,2) node[B] (aac) [label=left:{\scriptsize $0bd$}] {};
  \draw (16,2) node[B] (abb) [label=right:{\scriptsize $0ad$}] {};
  \draw (14,4) node[B] (abc) [label=left:{\scriptsize $0dd$}] {};
  \draw (12,6) node[B] (acc) [label=left:{\scriptsize $bdd$}] {};
  \draw (18,4) node[B] (bbb) [label=right:{\scriptsize $aad$}] {};
  \draw (10,4) node[B] (a) [label=left:{\scriptsize $bdb$}] {};
  \draw (16,6) node[B] (bbc) [label=right:{\scriptsize $add$}] {};
  \draw (14,8) node[B] (bcc) [label=left:{\scriptsize $ddd$}] {};

\foreach \from/\to in {aab/aac,aab/abb,aac/abc,abb/abc,abb/bbb,abc/acc,abc/bbc,bbb/bbc,acc/bcc,bbc/bcc,aac/a,a/acc}
    \draw[-] (\from) -- (\to);
\end{tikzpicture}  

& $\mathbf{3}$ \\

 \hline

$\alg  L_5$ & \tikzstyle{B} = [circle,draw=black,fill=black,minimum size=2pt,inner sep=0pt]
\tikzstyle{W} = [circle,draw=white,fill=white,minimum size=4pt,inner sep=0pt]
\tikzstyle{G} = [circle,draw=black,fill=green,minimum size=4pt,inner sep=0pt]
\tikzstyle{R} = [circle,draw=black,fill=red,minimum size=4pt,inner sep=0pt]

\begin{tikzpicture}[scale=0.15,auto=left]

  \draw (4,0) node[B] (0d) [label=right:] {};
  \draw (1,2) node[B] (ad) [label=left:{\scriptsize $a$}] {};
  \draw (4,2) node[B] (bd) [label=left:{\scriptsize $b$}] {};
  \draw (6,2) node[B] (cd) [label=right:{\scriptsize $c$}] {};
  \draw (2.5,4) node[B] (dd) [label=left:{\scriptsize $d$}] {};
  \draw (4,6) node[B] (1d) [label=left:] {};

\foreach \from/\to in {0d/ad,0d/bd,0d/cd,ad/dd,bd/dd,dd/1d,cd/1d}
    \draw[-] (\from) -- (\to);
\end{tikzpicture} &   

\tikzstyle{B} = [circle,draw=black,fill=black,minimum size=2pt,inner sep=0pt]
\tikzstyle{W} = [circle,draw=white,fill=white,minimum size=4pt,inner sep=0pt]
\tikzstyle{G} = [circle,draw=black,fill=green,minimum size=4pt,inner sep=0pt]
\tikzstyle{R} = [circle,draw=black,fill=red,minimum size=4pt,inner sep=0pt]

\begin{tikzpicture}[scale=0.14,auto=left]

  \draw (0,0) node[B] (a) [label=left:{\scriptsize $acd$}] {};
  \draw (3,3) node[B] (c) [label=left:{\scriptsize $abc$}] {};
  \draw (6,0) node[B] (d) [label=right:{\scriptsize $bcd$}] {};

\foreach \from/\to in {a/c,c/d}
    \draw[-] (\from) -- (\to);
\end{tikzpicture}

& $\mathbf{3}^2$ & $\mathbf{3}$ \\

 \hline

 $\alg B_1$ & \tikzstyle{B} = [circle,draw=black,fill=black,minimum size=2pt,inner sep=0pt]
\tikzstyle{W} = [circle,draw=white,fill=white,minimum size=4pt,inner sep=0pt]
\tikzstyle{G} = [circle,draw=black,fill=green,minimum size=4pt,inner sep=0pt]
\tikzstyle{R} = [circle,draw=black,fill=red,minimum size=4pt,inner sep=0pt]

\begin{tikzpicture}[scale=0.14,auto=left]
  \draw (3,0) node[B] (0) [label=right:] {};
  \draw (0,3) node[B] (a) [label=left:{\scriptsize $a$}] {};
  \draw (3,3) node[B] (b) [label=left:{\scriptsize $b$}] {};
  \draw (6,3) node[B] (c) [label=right:{\scriptsize $c$}] {};
  \draw (3,6) node[B] (d) [label=left:{\scriptsize $d$}] {};
  \draw (3,8) node[B] (1) [label=left:] {};

\foreach \from/\to in {0/a,0/b,0/c,a/d,b/d,c/d,d/1}
    \draw[-] (\from) -- (\to);
\end{tikzpicture} &   

\tikzstyle{B} = [circle,draw=black,fill=black,minimum size=2pt,inner sep=0pt]
\tikzstyle{W} = [circle,draw=white,fill=white,minimum size=4pt,inner sep=0pt]
\tikzstyle{G} = [circle,draw=black,fill=green,minimum size=4pt,inner sep=0pt]
\tikzstyle{R} = [circle,draw=black,fill=red,minimum size=4pt,inner sep=0pt]

\begin{tikzpicture}[scale=0.14,auto=left]
  \draw (0,3) node[B] (a) [label=left:{\scriptsize $abc$}] {};
  \draw (-9,7) node[W] [] {};

\end{tikzpicture}

& $\alg M_3$ & $\mathbf{2}$ \\
 \hline

  $\alg B_2$ & \tikzstyle{B} = [circle,draw=black,fill=black,minimum size=2pt,inner sep=0pt]
\tikzstyle{W} = [circle,draw=white,fill=white,minimum size=4pt,inner sep=0pt]
\tikzstyle{G} = [circle,draw=black,fill=green,minimum size=4pt,inner sep=0pt]
\tikzstyle{R} = [circle,draw=black,fill=red,minimum size=4pt,inner sep=0pt]

\begin{tikzpicture}[scale=0.14,auto=left]

    \draw (13,8) node[B] (0d) [label=right:] {};
  \draw (10,5) node[B] (ad) [label=left:{\scriptsize $a$}] {};
  \draw (13,5) node[B] (bd) [label=left:{\scriptsize $b$}] {};
  \draw (16,5) node[B] (cd) [label=right:{\scriptsize $c$}] {};
  \draw (13,2) node[B] (dd) [label=left:{\scriptsize $d$}] {};
  \draw (13,0) node[B] (1d) [label=left:] {};

\foreach \from/\to in {0d/ad,0d/bd,0d/cd,ad/dd,bd/dd,cd/dd,dd/1d}
    \draw[-] (\from) -- (\to);
\end{tikzpicture} &   

\tikzstyle{B} = [circle,draw=black,fill=black,minimum size=2pt,inner sep=0pt]
\tikzstyle{W} = [circle,draw=white,fill=white,minimum size=4pt,inner sep=0pt]
\tikzstyle{G} = [circle,draw=black,fill=green,minimum size=4pt,inner sep=0pt]
\tikzstyle{R} = [circle,draw=black,fill=red,minimum size=4pt,inner sep=0pt]

\begin{tikzpicture}[scale=0.14,auto=left]
  \draw (0,3) node[B] (a) [label=left:{\scriptsize $abc$}] {};
    \draw (-9,7) node[W] [] {};

\end{tikzpicture}

& $\alg M_3$ & $\mathbf{2}$ \\
 \hline

 $\alg B_3$ & \tikzstyle{B} = [circle,draw=black,fill=black,minimum size=2pt,inner sep=0pt]
\tikzstyle{W} = [circle,draw=white,fill=white,minimum size=4pt,inner sep=0pt]
\tikzstyle{G} = [circle,draw=black,fill=green,minimum size=4pt,inner sep=0pt]
\tikzstyle{R} = [circle,draw=black,fill=red,minimum size=4pt,inner sep=0pt]

\begin{tikzpicture}[scale=0.14,auto=left]
  \draw (2,0) node[B] (0) [label=right:] {};
  \draw (0,2) node[B] (a) [label=left:{\scriptsize $a$}] {};
  \draw (4,3) node[B] (b) [label=right:{\scriptsize $b$}] {};
  \draw (0,4) node[B] (c) [label=left:{\scriptsize $c$}] {};
  \draw (2,6) node[B] (d) [label=left:{\scriptsize $d$}] {};
  \draw (2,8) node[B] (1) [label=left:] {};

\foreach \from/\to in {0/a,0/b,a/c,b/d,c/d,d/1}
    \draw[-] (\from) -- (\to);
\end{tikzpicture} &  

\tikzstyle{B} = [circle,draw=black,fill=black,minimum size=2pt,inner sep=0pt]
\tikzstyle{W} = [circle,draw=white,fill=white,minimum size=4pt,inner sep=0pt]
\tikzstyle{G} = [circle,draw=black,fill=green,minimum size=4pt,inner sep=0pt]
\tikzstyle{R} = [circle,draw=black,fill=red,minimum size=4pt,inner sep=0pt]

\begin{tikzpicture}[scale=0.14,auto=left]
  \draw (0,3) node[B] (a) [label=left:{\scriptsize $abc$}] {};
    \draw (-9,7) node[W] [] {};

\end{tikzpicture}

& $\alg 2$ & $\mathbf{2}$ \\
 \hline
 $\alg B_4$ & \tikzstyle{B} = [circle,draw=black,fill=black,minimum size=2pt,inner sep=0pt]
\tikzstyle{W} = [circle,draw=white,fill=white,minimum size=4pt,inner sep=0pt]
\tikzstyle{G} = [circle,draw=black,fill=green,minimum size=4pt,inner sep=0pt]
\tikzstyle{R} = [circle,draw=black,fill=red,minimum size=4pt,inner sep=0pt]

\begin{tikzpicture}[scale=0.14,auto=left]

  \draw (12,8) node[B] (0d) [label=right:] {};
  \draw (10,6) node[B] (ad) [label=left:{\scriptsize $a$}] {};
  \draw (14,5) node[B] (bd) [label=right:{\scriptsize $b$}] {};
  \draw (10,4) node[B] (cd) [label=left:{\scriptsize $c$}] {};
  \draw (12,2) node[B] (dd) [label=left:{\scriptsize $d$}] {};
  \draw (12,0) node[B] (1d) [label=left:] {};

\foreach \from/\to in {0d/ad,0d/bd,ad/cd,bd/dd,cd/dd,dd/1d}
    \draw[-] (\from) -- (\to);
\end{tikzpicture}&

\tikzstyle{B} = [circle,draw=black,fill=black,minimum size=2pt,inner sep=0pt]
\tikzstyle{W} = [circle,draw=white,fill=white,minimum size=4pt,inner sep=0pt]
\tikzstyle{G} = [circle,draw=black,fill=green,minimum size=4pt,inner sep=0pt]
\tikzstyle{R} = [circle,draw=black,fill=red,minimum size=4pt,inner sep=0pt]

\begin{tikzpicture}[scale=0.14,auto=left]
  \draw (0,3) node[B] (a) [label=left:{\scriptsize $abc$}] {};
    \draw (-9,7) node[W] [] {};

%\foreach \from/\to in {0/a,0/b,0/c,a/d,b/d,c/d}
 %   \draw[-] (\from) -- (\to);
\end{tikzpicture}

& $\alg 2$ & $\mathbf{2}$ \\
 \hline

\end{tabular}
  
    \caption{The $5$- and $6$-element nondistributive lattices and their medians. Note that $\alg A_1^{\alg 2}$ denotes not tha direct square of $\alg A_1$, but the lattice of monotone $\mathbf{2}\to\alg A_1$ maps. In particular, $|\OM\alg A_1|=64$.} 
    \label{6elementsTable}
\end{table}
\newpage

\bibliographystyle{unsrt} 
\bibliography{resources}

\end{document}